\theoremstyle{plain}
\newtheorem{theorem}{Theorem}[section]
\newtheorem{lemma}{Lemma}[section]
\theoremstyle{definition}
\newtheorem{definition}{Definition}[section]
\theoremstyle{remark}
\newtheorem{note}{Note}[section]
\newtheorem{example}{Example}[section]
\title[A new presentation for the inner Tutte group of a matroid]
      {A new presentation for the inner Tutte group of a matroid}
\author[Elia Saini]{Elia Saini}
\address{(Elia Saini) Department of Mathematics, University of Fribourg, Chemin du Mus\'ee 23, CH-1700, Fribourg, CH.}
\email{elia.saini@unifr.ch}
\begin{document}

\begin{abstract}
The inner Tutte group of a matroid is a finitely generated abelian group 
introduced as an algebraic counterpart of Tutte's homotopy 
theory of matroids. The aim of this work is to provide a new presentation for this group with a 
set of generators that is smaller than those previously known.

\end{abstract}

\maketitle

 \section*{Introduction}
 
  Tutte groups of matroids provide an algebraic setting for the study 
of problems in matroid theory. They were first introduced and studied by Dress and Wenzel in
in \cite{DW89} and subsequent papers \cite{Wen89} and \cite{DW90}. 
In particular, in \cite{DW91} and \cite{Wen91} Dress and Wenzel
discussed how \emph{matroids with coefficients} arise from the construction of Tutte groups.

This algebraic approach has been successfully exploited 
in describing reorientation spaces of different types of ``decorated'' matroids. 
For instance, an oriented matroid can be seen as a decoration of a ``underlying'' matroid by means of the set 
of ``real signs'' $\{0,+,-\}$ (see \cite[Chapter 3]{Zie}). 
The space of reorientation classes of oriented matroids with given underlying matroid is 
described in \cite{GRS95} as an algebraically defined subset of homomorphisms from the inner Tutte group to 
the multiplicative group $\{+,-\}$. 
With similar techniques, Delucchi and the author gave in \cite{DS15} an analogous characterization for
spaces of phased matroids with given underlying matroid in terms of homomorphisms from the inner Tutte group to 
the multiplicative group $S^{1}$. 
In fact, making a parallel with oriented matroid theory, a phased matroid can be seen as a decoration of a 
``underlying'' matroid by means of elements of the set of ``phases'' $\{0\}\cup S^{1}$ 
(see \cite[Section 2]{AD12} and \cite[Section 1]{DS15}).
Moreover, the recent work of Baker \cite{Bak16} suggests that this framework can be 
fruitfully exploited to study reorientation spaces of matroids over hyperfields (reference work in progress).

The \emph{inner Tutte group} of a matroid $M$ was first defined in \cite{DW89} as the kernel of a 
natural homomorphism from the \emph{Tutte group} of $M$ to a free abelian group 
(compare Definition \ref{innerdef}).
Explicit formulas for the rank of these groups were obtained in 
\cite[Theorem 3.3]{BL10} and \cite[Theorem 8.1]{DS15}. This results, together with \cite[Theorem 5.4]{Wen89}, imply a full 
description of the inner Tutte group of all matroids with ground set of up to $7$ elements.

In \cite[Theorem 3]{GRS95} and \cite[Theorem 4]{GRS95} two presentations of the inner Tutte group are described. 
However, both these presentations have very large sets of generators. 

In this work we provide 
a presentation of the inner Tutte group of a matroid with a smaller set of 
generators (Definition \ref{TMI0}).  
Together with \cite[Theorem 8.1]{DS15}, 
this affords a relatively efficient computation of the rank of the inner Tutte group of a matroid.
The central idea of our proof (Theorem \ref{main}) is to get rid of redundancies exploiting a geometric 
interpretation of the generators of the inner Tutte group in terms of ``cross-ratios'' 
(see \cite[Corollary 2]{GRS95} and \cite[Theorem 6.4]{DS15} for furhter details). 

As an illustration of the size of our improvements, in Table \ref{table1}
we compare the number of generators of the presentation 
of the inner Tutte group given in \cite[Theorem 4]{GRS95} 
with the number of generators of our presentation for some remarkable matroids.  

    \smallskip
   \noindent \textbf{Overview.} Section \ref{Section1} provides some basic definitions on matroids and Tutte groups. 
In Section \ref{Section2}, to each matroid we associate our ``smaller'' abelian group, 
presented by generators and relations, and we prove that it is isomorphic to the inner Tutte group of the given matroid.
For readibility's sake some technical computations are postponed to Appendix \ref{AppendixA}.

   \smallskip
  \noindent \textbf{Acknowledgments.} The author would like to express his gratitude to his
advisor Emanuele Delucchi and to Linard Hoessly and Ivan Martino for the very helpful discussions throughout 
all the time this project has been developed.  
This work is supported by Swiss National Science Foundation grant PP00P2\_150552/1.

 \section{Matroids and arrangements}\label{Section1}
   In this section we quickly provide some basic notions 
about matroids and Tutte groups. We point to the book \cite{Oxl92} for a 
detailed treatment of matroid theory and we refer to \cite{DW89} and subsequent 
papers of the same authors for a general theory of Tutte groups.

    \subsection{Matroids}
     A \emph{matroid} $M$ is a pair $(E,\mathfrak{I})$, 
where $E$ is a finite \emph{ground set} and $\mathfrak{I}\subseteq2^{E}$ is a 
family of subsets of $E$ that fulfill the following axioms:
\begin{enumerate}[label=(I\arabic{*})]
 \item \label{I1} $\emptyset\in\mathfrak{I}$;
 \item \label{I2} If $I\in\mathfrak{I}$ and $J\subseteq I$, then $J\in\mathfrak{I}$;
 \item \label{I3} If $I$ and $J$ are in $\mathfrak{I}$ and $|I|<|J|$, then there exists $e\in J\setminus I$ with the property 
                  that $I\cup\{e\}\in\mathfrak{I}$.
\end{enumerate}
The subsets of $E$ that belong to $\mathfrak{I}$ are the \emph{independent sets} of $M$. 
Maximal independent sets (with respect to set inclusion) are named \emph{bases} and 
we write $\mathfrak{B}$ for the collection of basis of $M$. 
A \emph{dependent} set of $M$ is a subset of $E$ that is not in $\mathfrak{I}$. 
The subsets of $E$ that are minimal with respect to set inclusion are called \emph{circuits} and 
$\mathfrak{C}$ stands for the family of circuits of $M$.

For a subset $S\subseteq E$, we define its \emph{rank} by the formula
\begin{equation*}
 \operatorname{rk}(S)=\max\left\lbrace
                         |T\cap B|
                         \mid
                         B\in\mathfrak{B}
                        \right\rbrace
\end{equation*}
and we set the \emph{rank of the matroid} $M$ by $\operatorname{rk}(M):=\operatorname{rk}(E)$. 
If $\operatorname{rk}(S)=\operatorname{rk}(M)$ we say that $S$ is a \emph{spanning} set of $M$.

The collection of complements of spanning sets of $M$ does fulfills axioms \ref{I1}, \ref{I2} and \ref{I3}. Hence, it is the 
family of independent sets of a matroid called \emph{dual} to $M$ and denoteb by $M^{\ast}$. Its rank function
$\operatorname{rk}^{\ast}$ is related to that of $M$ 
by the identity 
\begin{equation*}
 \operatorname{rk}^{\ast}(A)=\operatorname{rk}(E\setminus A) + \vert A \vert - \operatorname{rk}(E)
\end{equation*}

We say \emph{cocircuits} and \emph{cobases} of $M$ for the of circuits and bases of $M^{\ast}$.
The collections of cocircuits and cobases of $M$ will be denoted by $\mathfrak{C}^{\ast}$ and $\mathfrak{B}^{\ast}$, 
respectively.

Now, let us consider a subset $T$ of the ground set $E$ of the given matroid $M$.
It is not hard to see that the family of subsets of $T$ that are independent sets of $M$ fulfills axioms \ref{I1}, \ref{I2} 
and \ref{I3}. Thus, this is the collection of independent sets of a matroid named \emph{restriction} of $M$ to $T$.
We write $M[T]$ for the restriction of $M$ to $T$.
On the other hand, the \emph{contraction} of $T$ in $M$ is defined as the matroid $(M^{\ast}[E\setminus T])^{\ast}$. 
A matroid that can be obtained from $M$ with a sequence of restrictions and contractios will be called
a \emph{minor} of $M$.

\begin{example}[The Fano matroid] \label{fano} 
The Fano matroid is the matroid denoted by $F_{7}$ and defined on the ground set $E=\{1,2,\ldots,7\}$ 
by the collection of circuits
\begin{equation*}
 \mathfrak{C}=\left\lbrace
                \{1,2,3\},\{4,5,6\},\{1,4,7\},\{1,5,6\},\{3,4,5\},\{3,6,7\},\{2,4,6\}
              \right\rbrace.
\end{equation*}
Often in this paper 
we will work with matroids ``without minors of Fano or dual-Fano type'', that are matroids for which
neither $F_{7}$ nor its dual can appear as minors.
\end{example}

Given matroids $M_{1}$ and $M_{2}$ with ground sets $E_{1}$ and $E_{2}$ and independent sets 
$\mathfrak{I}_{1}$ and $\mathfrak{I}_{2}$, the \emph{direct sum} of $M_{1}$ and $M_{2}$ is the matroid $M_{1}\oplus M_{2}$ 
with ground set $E_{1}\cup E_{2}$ and independent sets 
\begin{equation*}
 \left\lbrace
 I_{1}\cup I_{2}\mid I_{1}\in\mathfrak{I}_{1}\text{ and }I_{2}\in\mathfrak{I}_{2}
 \right\rbrace
\end{equation*}
We say that $M$ is \emph{disconnected} if there exists a proper non-empty subset $T$ 
of the ground set $E$ such that $M=M[T]\oplus M[E\setminus T]$. $M$ will be called \emph{connected} otherwise.
A \emph{connected component} of $M$ is a maximal inclusion subset $T$ of $E$ such that $M[T]$ is connected.
From \cite[Corollary 4.2.13]{Oxl92} there is a unique (up to permutations) decomposition of $M$ as direct sum of connected 
matroids. In particular, from this it follows that the number $c_{M}$ of connected components of $M$ is well defined.
   
   \subsection{Tutte groups}
    We consider sets of the form $F=C_{1}\cup\cdots\cup C_{k}$ with $C_{i}\in\mathfrak{C}$.
If $\emptyset\subset F_{0}\subset F_{1}\subset\cdots\subset F_{d}=F$
is a maximal chain of such sets, then $d$ depends 
only on $F$ and is called the \emph{dimension} of $F$. We denote it by
$\operatorname{dim}(F)$. Notice that 
\begin{equation*}
 \operatorname{dim}(F)=|F|-\operatorname{rk}(F)-1
\end{equation*}

The group $\mathbb{T}_{M}^{\mathfrak{C},\mathfrak{C}^{\ast}}$ is defined to be the multiplicative abelian group with 
formal generators given by the symbols 
\begin{itemize}
 \item $\epsilon_{M}$;
 \item $C(x)$ for $C\in\mathfrak{C}$ and $x\in C$;
 \item $D(y)$ for $D\in\mathfrak{C}^{\ast}$ and $y\in D$;
\end{itemize}
with relations
\begin{itemize}
 \item $\epsilon_{M}^{2}=1$;
 \item $C(x)D(x)=\epsilon_{M}C(y)D(y)$ for $C\in\mathfrak{C},$ $D\in\mathfrak{C}^{\ast}$ with $\{x,y\}=C\cap D$.
\end{itemize}

The \emph{Tutte group} $\mathbb{T}_{M}$ is then 
the subgroup of $\mathbb{T}_{M}^{\mathfrak{C},\mathfrak{C}^{\ast}}$ generated by 
\begin{itemize}
 \item $\epsilon_{M}$;
 \item $C(x)C(y)^{-1}$ for $C\in\mathfrak{C},$ $x,y\in C$;
 \item $D(x)D(y)^{-1}$ for $D\in\mathfrak{C}^{\ast},$ $x,y\in D$.
\end{itemize}

\begin{definition}[Inner Tutte group]\label{innerdef}
 Let us consider the group homomorphism
 $\Lambda:\mathbb{T}_{M}^{\mathfrak{C},\mathfrak{C}^{\ast}}\longrightarrow
\mathbb{Z}^{|E|}\times\mathbb{Z}^{|\mathfrak{C}|}\times\mathbb{Z}^{|\mathfrak{C}^{\ast}|}$, defined by 
\begin{itemize}
 \item $\epsilon_{M}\mapsto0$;
 \item $C(x)\mapsto(\mathds{1}_{x},\mathds{1}_{C},0)$;
 \item $D(y)\mapsto(-\mathds{1}_{y},0,\mathds{1}_{D})$;
\end{itemize}
where $\mathds{1}$ is the indicator function.
The \emph{inner Tutte group} $\mathbb{T}_{M}^{(0)}$ of $M$ is the kernel of  the homomorphism $\Lambda$. 
\end{definition}

 One can see that 
 $\mathbb{T}_{M}^{(0)}\vartriangleleft\mathbb{T}_{M}\vartriangleleft\mathbb{T}_{M}^{\mathfrak{C},\mathfrak{C}^{\ast}}$. 
 Moreover, if $c_{M}$ is the number of connected components of $M$, with \cite[Theorem 1.5]{DW89} we have
\begin{equation*}
 \begin{array}{ll}
  \mathbb{T}_{M}\cong\mathbb{T}_{M}^{(0)}
                     \times
                     \mathbb{Z}^{|E|-c_{M}} & 
  \mathbb{T}_{M}^{\mathfrak{C},\mathfrak{C}^{\ast}}\cong\mathbb{T}_{M}^{(0)}
                                                        \times
                                                        \mathbb{Z}^{|E|-c_{M}}
                                                        \times
                                                        \mathbb{Z}^{|\mathfrak{C}|}
                                                        \times
                                                        \mathbb{Z}^{|\mathfrak{C}^{\ast}|}
 \end{array}                                                         
\end{equation*}

Thus, any of these groups is known as soon as we know 
$\mathbb{T}_{M}^{(0)}$. In order to prove Theorem \ref{main}, following \cite{GRS95} we provide the subsequent definition.

\begin{definition}\label{TM2}
 Given a matroid $M$ the group $\mathbb{T}_{M}^{(2)}$ is the finetely generated abelian group 
 with formal generators given by the symbols 
\begin{enumerate}[label=(G\arabic{*})]
 \item \label{G1} $\xi_{M}$;
 \item \label{G2} $[C_{i_{1}} C_{i_{2}}|C_{i_{3}} C_{i_{4}}]$, 
                  where $C_{i_{1}}$, $C_{i_{2}}$, $C_{i_{3}}$, $C_{i_{4}}\in\mathfrak{C}$ are circuits of $M$ such that
                  $L=C_{i_{1}}\cup C_{i_{2}}\cup C_{i_{3}}\cup C_{i_{4}}=C_{i_{k}}\cup C_{i_{l}}$ for $k=1,2$, $l=3,4$, 
                  $\operatorname{dim}(L)=1$;
\end{enumerate}
and relations
\begin{enumerate}[label=(R\arabic{*})]
 \item \label{R1} $\xi_{M}^{2}=1_{\mathbb{T}_{M}^{(2)}}$;
 \item \label{R2} $\xi_{M}=1_{\mathbb{T}_{M}^{(2)}}$ if $M$ has minors of Fano or dual-Fano type;
 \item \label{R3} $[C_{i_{1}}C_{i_{2}}|C_{i_{3}}C_{i_{3}}]=1_{\mathbb{T}_{M}^{(2)}}$;
 \item \label{R4} $[C_{i_{1}}C_{i_{2}}|C_{i_{3}}C_{i_{3}}]=[C_{i_{3}}C_{i_{4}}|C_{i_{1}}C_{i_{2}}]$;
 \item \label{R5} $[C_{i_{1}}C_{i_{2}}|C_{i_{3}}C_{i_{4}}]
                  [C_{i_{1}}C_{i_{2}}|C_{i_{4}}C_{i_{5}}]
                  [C_{i_{1}}C_{i_{2}}|C_{i_{5}}C_{i_{3}}]=1_{\mathbb{T}_{M}^{(2)}}$;
 \item \label{R6} $[C_{i_{1}}C_{i_{2}}|C_{i_{3}}C_{i_{4}}]
                  [C_{i_{1}}C_{i_{4}}|C_{i_{2}}C_{i_{3}}]
                  [C_{i_{1}}C_{i_{3}}|C_{i_{4}}C_{i_{2}}]=\xi_{M}$;
 \item \label{R7}$[C_{i_{1}}C_{i_{2}}|C_{i_{6}}C_{i_{9}}]
                  [C_{i_{2}}C_{i_{3}}|C_{i_{4}}C_{i_{7}}]
                  [C_{i_{3}}C_{i_{1}}|C_{i_{5}}C_{i_{8}}]=1_{\mathbb{T}_{M}^{(2)}}$ 
                for any family of circuits $\{C_{i_{1}},\ldots,C_{i_{9}}\}\subseteq\mathfrak{C}$ such that:
               \begin{itemize}
                 \item $\operatorname{dim}(L_{i_{p}})=1$ for $L_{i_p}=C_{i_{q}}\cup C_{i_{r}}$, 
                       where $\{p,q,r\}=\{1,2,3\}$;
                 \item $\operatorname{dim}(P)=2$ where $P=C_{i_{1}}\cup C_{i_{2}}\cup C_{i_{3}}$;
                 \item $C_{i_{s+3}},C_{i_{s+6}}\subseteq L_{i_{s}}$ for $s=1,2,3$;
                 \item $\operatorname{dim}(L_{i_{h}})=1$ for $L_{i_{h}}=C_{i_{3+h}}\cup C_{i_{4+h}}\cup C_{i_{5+h}}$, 
                       $h\in\{1,4\}$;
                 \item $\{C_{i_{1}},C_{i_{2}},C_{i_{3}}\}\cap\{C_{i_{4}},\ldots,C_{i_{9}}\}=\emptyset$.
                \end{itemize}
\end{enumerate}
We write $\mathbb{W}_{M}^{(2)}$ for the set of generators of $\mathbb{T}_{M}^{(2)}$ and we denote by $G_{M}$ the number of 
generators of $\mathbb{T}_{M}^{(2)}$, that is, $G_{M}=\left|\mathbb{W}_{M}^{(2)}\right|$.
Recall that the groups $\mathbb{T}_{M}^{(2)}$ and $\mathbb{T}_{M}^{(0)}$ are isomorphic 
(see \cite[Theorem 4]{GRS95} for more details).
\end{definition}

\begin{note}\label{sym}
Relations \ref{R3} and \ref{R5} imply that 
$[C_{i_{1}}C_{i_{2}}|C_{i_{3}}C_{i_{4}}]=[C_{i_{1}}C_{i_{2}}|C_{i_{4}}C_{i_{3}}]^{-1}$.
\end{note}

 \section{Results}\label{Section2}
   Throughout this section we assume that, for a matroid $M$ with set of circuits $\mathfrak{C}$, an enumeration 
$\{C_{j}\}_{j\in J}$ of $\mathfrak{C}$ and an arbitrary total order $<_{J}$ on $J$ are given.

Our aim is to define a group $\mathcal{T}_{M,<_{J}}^{(0)}$ that is isomorphic to the inner Tutte group of $M$, but it has a 
presentation with less generators than those described in \cite{GRS95}. To show this isomorphism result, we exploit some 
techniques developed in the proof of \cite[Theorem 6.4]{DS15} in order to define group homomorphisms from 
$\mathbb{T}_{M}^{(0)}$ to $\mathcal{T}_{M,<_{J}}^{(0)}$ and from $\mathcal{T}_{M,<_{J}}^{(0)}$ to $\mathbb{T}_{M}^{(0)}$ 
that are one the inverse of the other.

\begin{definition}\label{TMI0}
 We denote by $\mathcal{T}_{M,<_{J}}^{(0)}$ the multiplicative abelian group with formal generators given by the symbols
 \begin{enumerate}[label=(Q\arabic{*})]
  \item \label{Q1} $\eta_{M,<_{J}}$;
  \item \label{Q2} $(C_{j_{1}}C_{j_{2}}|C_{j_{3}}C_{j_{4}})$ 
                   where $C_{j_{1}}$, $C_{j_{2}}$, $C_{j_{3}}$, $C_{j_{4}}\in\mathfrak{C}$ are circuits of $M$ such that
                   $\operatorname{dim}(C_{j_{1}}\cup C_{j_{2}}\cup C_{j_{3}}\cup C_{j_{4}})=1$ 
                   and $j_{1}<j_{2}$, $j_{3}<j_{4}$, $j_{1}<j_{3}$;
 \end{enumerate}
 and relations
  \begin{enumerate}[label=(S\arabic{*})]
   \item \label{S1} $\eta_{M,<_{J}}^{2}=1_{\mathcal{T}_{M,<_{J}}^{(0)}}$;
   \item \label{S2} $\eta_{M,<_{J}}=1_{\mathcal{T}_{M,<_{J}}^{(0)}}$ if $M$ has minors of Fano or dual-Fano type;
   \item \label{S3} $(C_{j_{1}}C_{j_{2}}|C_{j_{3}}C_{j_{4}})
                     (C_{j_{1}}C_{j_{4}}|C_{j_{2}}C_{j_{3}})
                     (C_{j_{1}}C_{j_{3}}|C_{j_{2}}C_{j_{4}})^{-1}=\eta_{M,<_{J}}$ 
                    for any family of circuits $\{C_{j_{1}},C_{j_{2}},C_{j_{3}},C_{j_{4}}\}\subseteq\mathfrak{C}$ such that               
                    $\operatorname{dim}(C_{j_{1}}\cup C_{j_{2}}\cup C_{j_{3}}\cup C_{j_{4}})=1$ 
                    and $j_{1}<j_{2}<j_{3}<j_{4}$;
  \item \label{S4} \begin{equation*}
                      \left\lbrace
                     \begin{array}{l}
                 (C_{j_{1}}C_{j_{2}}|C_{j_{3}}C_{j_{4}})
                 (C_{j_{1}}C_{j_{2}}|C_{j_{4}}C_{j_{5}})
                 (C_{j_{1}}C_{j_{2}}|C_{j_{3}}C_{j_{5}})^{-1}=1_{\mathcal{T}_{M,<_{J}}^{(0)}}\\
                 
                 (C_{j_{1}}C_{j_{3}}|C_{j_{2}}C_{j_{4}})
                 (C_{j_{1}}C_{j_{3}}|C_{j_{4}}C_{j_{5}})
                 (C_{j_{1}}C_{j_{3}}|C_{j_{2}}C_{j_{5}})^{-1}=1_{\mathcal{T}_{M,<_{J}}^{(0)}}\\
                 
                 (C_{j_{1}}C_{j_{4}}|C_{j_{2}}C_{j_{3}})
                 (C_{j_{1}}C_{j_{4}}|C_{j_{3}}C_{j_{5}})
                 (C_{j_{1}}C_{j_{4}}|C_{j_{2}}C_{j_{5}})^{-1}=1_{\mathcal{T}_{M,<_{J}}^{(0)}}\\
                 
                 (C_{j_{1}}C_{j_{5}}|C_{j_{2}}C_{j_{3}})
                 (C_{j_{1}}C_{j_{5}}|C_{j_{3}}C_{j_{4}})
                 (C_{j_{1}}C_{j_{5}}|C_{j_{2}}C_{j_{4}})^{-1}=1_{\mathcal{T}_{M,<_{J}}^{(0)}}\\
                 
                 (C_{j_{1}}C_{j_{4}}|C_{j_{2}}C_{j_{3}})
                 (C_{j_{2}}C_{j_{3}}|C_{j_{4}}C_{j_{5}})
                 (C_{j_{1}}C_{j_{5}}|C_{j_{2}}C_{j_{3}})^{-1}=1_{\mathcal{T}_{M,<_{J}}^{(0)}}\\
                 
                 (C_{j_{1}}C_{j_{3}}|C_{j_{2}}C_{j_{4}})
                 (C_{j_{2}}C_{j_{4}}|C_{j_{3}}C_{j_{5}})
                 (C_{j_{1}}C_{j_{5}}|C_{j_{2}}C_{j_{3}})^{-1}=1_{\mathcal{T}_{M,<_{J}}^{(0)}}\\
                 
                 (C_{j_{1}}C_{j_{3}}|C_{j_{2}}C_{j_{5}})
                 (C_{j_{2}}C_{j_{5}}|C_{j_{3}}C_{j_{4}})
                 (C_{j_{1}}C_{j_{4}}|C_{j_{2}}C_{j_{5}})^{-1}=1_{\mathcal{T}_{M,<_{J}}^{(0)}}\\
                 
                 (C_{j_{1}}C_{j_{2}}|C_{j_{3}}C_{j_{4}})
                 (C_{j_{2}}C_{j_{5}}|C_{j_{3}}C_{j_{4}})
                 (C_{j_{1}}C_{j_{5}}|C_{j_{3}}C_{j_{4}})^{-1}=1_{\mathcal{T}_{M,<_{J}}^{(0)}}\\
                 
                 (C_{j_{1}}C_{j_{2}}|C_{j_{3}}C_{j_{5}})
                 (C_{j_{2}}C_{j_{4}}|C_{j_{3}}C_{j_{5}})
                 (C_{j_{1}}C_{j_{4}}|C_{j_{3}}C_{j_{5}})^{-1}=1_{\mathcal{T}_{M,<_{J}}^{(0)}}\\
                 
                 (C_{j_{1}}C_{j_{2}}|C_{j_{4}}C_{j_{5}})
                 (C_{j_{2}}C_{j_{3}}|C_{j_{4}}C_{j_{5}})
                 (C_{j_{1}}C_{j_{3}}|C_{j_{4}}C_{j_{5}})^{-1}=1_{\mathcal{T}_{M,<_{J}}^{(0)}}\\
               \end{array}
               \right.
              \end{equation*}
              where $C_{j_{1}}$, $C_{j_{2}}$, $C_{j_{3}}$, $C_{j_{4}}$, $C_{j_{5}}\in\mathfrak{C}$ are circuits of $M$ with 
              the properties that
              $\operatorname{dim}(C_{j_{1}}\cup C_{j_{2}}\cup C_{j_{3}}\cup C_{j_{4}}\cup C_{j_{5}})=1$ 
              and $j_{1}<j_{2}<j_{3}<j_{4}<j_{5}$;
  \item \label{S5} $\langle C_{j_{1}}C_{j_{2}}|C_{j_{6}}C_{j_{9}}\rangle
               \langle C_{j_{2}}C_{j_{3}}|C_{j_{4}}C_{j_{7}}\rangle
               \langle C_{j_{3}}C_{j_{1}}|C_{j_{5}}C_{j_{8}}\rangle=1_{\mathcal{T}_{M,<_{J}}^{(0)}}$
              for any family of circuits $\{C_{i_{1}},\ldots,C_{i_{9}}\}\subseteq\mathfrak{C}$ as in \ref{R7} with the 
              extra conditions:
              \begin{enumerate}[label=(O\arabic{*})]
                \item \label{O1} $j_{1}<j_{2}<j_{3}$;
                \item \label{O2} $j_{4}\geq j_{7}$, $j_{5}\geq j_{8}$ and $j_{6}\geq j_{9}$ do not all hold at the same time.
               \end{enumerate}
               Here $\langle C_{d_{1}}C_{d_{2}}|C_{d_{3}}C_{d_{4}}\rangle$ are the symbols given by the formula
               \begin{equation}\tag{$\ast$}\label{FM}
                \langle C_{d_{1}}C_{d_{2}}|C_{d_{3}}C_{d_{4}}\rangle=
                   \left\lbrace
                      \begin{array}{lcccc}
                         1_{\mathcal{T}_{M,<_{J}}^{(0)}} & \text{if} & d_{1}=d_{2} & \text{or}   & d_{3}=d_{4}\\
            (C_{d_{1}}C_{d_{2}}|C_{d_{3}}C_{d_{4}})      & \text{if} & d_{1}<d_{2} & d_{3}<d_{4} & d_{1}<d_{3} \\
            (C_{d_{3}}C_{d_{4}}|C_{d_{1}}C_{d_{2}})      & \text{if} & d_{1}<d_{2} & d_{3}<d_{4} & d_{3}<d_{1} \\
            (C_{d_{1}}C_{d_{2}}|C_{d_{4}}C_{d_{3}})^{-1} & \text{if} & d_{1}<d_{2} & d_{4}<d_{3} & d_{1}<d_{4} \\
            (C_{d_{4}}C_{d_{3}}|C_{d_{1}}C_{d_{2}})^{-1} & \text{if} & d_{1}<d_{2} & d_{4}<d_{3} & d_{4}<d_{1} \\
            (C_{d_{2}}C_{d_{1}}|C_{d_{3}}C_{d_{4}})^{-1} & \text{if} & d_{2}<d_{1} & d_{3}<d_{4} & d_{2}<d_{3} \\
            (C_{d_{3}}C_{d_{4}}|C_{d_{2}}C_{d_{1}})^{-1} & \text{if} & d_{2}<d_{1} & d_{3}<d_{4} & d_{3}<d_{2} \\
            (C_{d_{2}}C_{d_{1}}|C_{d_{4}}C_{d_{3}})      & \text{if} & d_{2}<d_{1} & d_{4}<d_{3} & d_{2}<d_{4} \\
            (C_{d_{4}}C_{d_{3}}|C_{d_{2}}C_{d_{1}})      & \text{if} & d_{2}<d_{1} & d_{4}<d_{3} & d_{4}<d_{2} \\
                      \end{array}
                   \right.
               \end{equation}
               and defined for any family of circuits $\{C_{d_{1}},C_{d_{2}},C_{d_{3}},C_{d_{1}}\}\subseteq\mathfrak{C}$
               satisfying $\operatorname{dim}(C_{d_{1}}\cup C_{d_{2}}\cup C_{d_{3}}\cup C_{d_{4}})=1$ and 
               $\{C_{d_{1}},C_{d_{2}}\}\cap\{C_{d_{3}},C_{d_{4}}\}=\emptyset$.
  \end{enumerate}
We write $\mathcal{W}_{M,<_{J}}^{(0)}$ for the set of generators of $\mathcal{T}_{M,<_{J}}^{(0)}$ and we denote by 
$g_{M}$ the number of generators of $\mathcal{T}_{M,<_{H}}^{(0)}$, 
that is, $g_{M}=\left|\mathcal{W}_{M,<_{J}}^{(0)}\right|$.
\end{definition}

\begin{note}
 If $\{C_{h}\}_{h\in H}$ is another enumeration of $\mathfrak{C}$ with total order $<_{H}$ on $H$, 
 then the groups $\mathcal{T}_{M,<_{H}}^{(0)}$ and $\mathcal{T}_{M,<_{J}}^{(0)}$ are isomorphic. To see this it suffices to 
 consider a suitable relabeling of the circuits of $M$. As a consequence of this, the quantity $g_{M}$ is 
 well defined, neither depending on the choice of the enumeration of circuits of $M$ 
 nor the total ordering of such enumeration. 
\end{note}

\begin{theorem}\label{main}
 The groups $\mathcal{T}_{M,<_{J}}^{(0)}$ and $\mathbb{T}_{M}^{(0)}$ are isomorphic. 
\end{theorem}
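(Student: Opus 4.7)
The plan is to invoke \cite[Theorem 4]{GRS95}, which gives $\mathbb{T}_{M}^{(0)}\cong\mathbb{T}_{M}^{(2)}$ (as recalled in Definition \ref{TM2}), and to reduce the claim to building a pair of mutually inverse homomorphisms between $\mathbb{T}_{M}^{(2)}$ and $\mathcal{T}_{M,<_{J}}^{(0)}$. Formula (\ref{FM}) is the bridge: it rewrites an arbitrarily ordered cross-ratio symbol in the canonical form $j_{1}<j_{2}$, $j_{3}<j_{4}$, $j_{1}<j_{3}$ used to parametrize the generators of $\mathcal{T}_{M,<_{J}}^{(0)}$, thereby absorbing into the very notation the symmetries that the relations \ref{R3}--\ref{R4} of $\mathbb{T}_{M}^{(2)}$ make manifest.

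First I would define the forward map $\Phi:\mathbb{T}_{M}^{(2)}\to\mathcal{T}_{M,<_{J}}^{(0)}$ on generators by $\xi_{M}\mapsto\eta_{M,<_{J}}$ and $[C_{i_{1}}C_{i_{2}}|C_{i_{3}}C_{i_{4}}]\mapsto\langle C_{i_{1}}C_{i_{2}}|C_{i_{3}}C_{i_{4}}\rangle$, and then verify the relations \ref{R1}--\ref{R7}. Relations \ref{R1} and \ref{R2} go to \ref{S1} and \ref{S2} directly; relations \ref{R3} and \ref{R4} are built into (\ref{FM}). Relation \ref{R6}, after sorting the four indices in ascending order, becomes exactly \ref{S3}, and relation \ref{R5} splits, according to the relative ordering of the five indices, into a short list of cases each of which is handled by re-expressing the three factors via (\ref{FM}) and comparing with one of the ten identities of \ref{S4}. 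Conversely, the inverse $\Psi:\mathcal{T}_{M,<_{J}}^{(0)}\to\mathbb{T}_{M}^{(2)}$ sends $\eta_{M,<_{J}}\mapsto\xi_{M}$ and $(C_{j_{1}}C_{j_{2}}|C_{j_{3}}C_{j_{4}})\mapsto[C_{j_{1}}C_{j_{2}}|C_{j_{3}}C_{j_{4}}]$; here \ref{S1}--\ref{S2} are immediate, \ref{S3} is an instance of \ref{R6}, \ref{S4} is a tuple of instances of \ref{R5}, and \ref{S5} unfolds via (\ref{FM}) and \ref{R3}--\ref{R4} into a single instance of \ref{R7}. Since $\Phi$ and $\Psi$ fix the underlying generators under the correspondence $\langle\cdot\rangle\leftrightarrow(\cdot)$, the compositions $\Phi\circ\Psi$ and $\Psi\circ\Phi$ are the identities.

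The main obstacle lies in verifying that $\Phi$ respects \ref{R7}. Whereas \ref{R7} in $\mathbb{T}_{M}^{(2)}$ is quantified over every indexing of the nine circuits of a Plücker-type configuration, the corresponding relation \ref{S5} in $\mathcal{T}_{M,<_{J}}^{(0)}$ is only asserted under the side conditions \ref{O1} and \ref{O2}. One must therefore show that every instance of \ref{R7} is derivable, via \ref{S3}--\ref{S4} and the rewriting rules of (\ref{FM}), from the restricted instances selected by \ref{O1}--\ref{O2}. This amounts to an explicit case analysis over the combinatorial symmetries of the $9$-circuit configuration: permuting $\{C_{i_{1}},C_{i_{2}},C_{i_{3}}\}$ to achieve $j_{1}<j_{2}<j_{3}$, and swapping each pair $(C_{i_{s+3}},C_{i_{s+6}})$ within its line $L_{i_{s}}$ to eliminate the simultaneous reversal forbidden by \ref{O2}. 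This bookkeeping, together with the analogous routine verifications for \ref{R5} vs.\ \ref{S4}, is where the bulk of the technical work lies, and I expect it to be relegated (as the author announces in the overview) to the appendix.
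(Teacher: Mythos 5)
Your proposal is correct and follows essentially the same route as the paper: reduce via \cite[Theorem 4]{GRS95} to an isomorphism $\mathbb{T}_{M}^{(2)}\cong\mathcal{T}_{M,<_{J}}^{(0)}$, define the two maps on generators exactly as you do, verify the relations in both directions (with the permutation bookkeeping for \ref{R6} and \ref{R7} deferred to appendix lemmas), and conclude bijectivity from the compositions on generators. You also correctly identify the verification of \ref{R7} against the restricted relation \ref{S5} as the main technical point, which is precisely what the paper's Lemma \ref{A2} handles.
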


In Table \ref{table1} we compare the number $G_{M}$ of generators of the presentation of the inner Tutte group 
of \cite[Theorem 4]{GRS95} (see Definition \ref{TM2}) with 
the number $g_{M}$ of generators of our presentation (see Definition \ref{TMI0}) for some remarkable matroids.
These results are obtained with SAGE on a standard laptop.

\begin{table}
\begin{equation*}
 \begin{array}{lcc}
  \toprule
  \text{Matroid $M$}      & G_{M} & g_{M}  \\
  \midrule
   U_{2}(4)               & 85 &   4 \\
  \midrule
   U_{2}(5)               & 421 & 16 \\
   \midrule
   U_{3}(5)               & 261 & 16 \\
   \midrule
   M(K_{4})               & 109 & 1   \\
   \midrule
   \mathcal{W}^{3}        & 307 & 10  \\
   \midrule
   Q_{6}                  & 615 & 28  \\
   \midrule
   P_{6}                  & 1033 & 55 \\
   \midrule 
   U_{3}(6)               & 1561 & 91 \\
   \midrule 
   R_{6}                  & 505  & 19 \\
   \midrule 
   F_{7}                  & 379  & 1  \\
   \midrule
   F_{7}^{\ast}           & 127  & 1  \\
   \midrule
   F_{7}^{-}              & 775  & 19 \\
   \midrule
   (F_{7}^{-})^{\ast}     & 325  & 10 \\
   \midrule 
   P_{7}                  & 1171 & 37 \\
 \bottomrule
 \end{array}
\end{equation*}
\caption{Comparison between $G_{M}$ and $g_{M}$ for some remarkable cases.\label{table1}}
\end{table}

\begin{proof}[{Proof of Theorem \ref{main}}]
 With \cite[Theorem 4]{GRS95} it is enough to prove that the groups $\mathbb{T}_{M}^{(2)}$ and $\mathcal{T}_{M,<_{J}}^{(0)}$ 
 are isomorphic. To see this, let $\mathbb{W}_{M}^{(2)}$ and $\mathcal{W}_{M,<_{J}}^{(0)}$ be the set of generators of 
 $\mathbb{T}_{M}^{(2)}$ and $\mathcal{T}_{M,<_{J}}^{(0)}$ as in Definition \ref{TM2} and Definition \ref{TMI0}.
 Let us consider the map $\phi:\mathbb{W}_{M}^{(2)}\longrightarrow\mathcal{T}_{M,<_{J}}^{(0)}$ given by
 \begin{enumerate}[label=(D\arabic{*})]
  \item \label{D1} $\phi(\xi_{M})=\eta_{M,<_{J}}$;
  
  \item \label{D2} $\phi([C_{i_{1}}C_{i_{2}}|C_{i_{3}}C_{i_{4}}])=\langle C_{i_{1}}C_{i_{2}}|C_{i_{3}}C_{i_{4}}\rangle$ where 
                   $\langle C_{i_{1}}C_{i_{2}}|C_{i_{3}}C_{i_{4}}\rangle$ are the symbols defined in \eqref{FM}.
 \end{enumerate}
 
 The map $\phi:\mathbb{W}_{M}^{(2)}\longrightarrow\mathcal{T}_{M,<_{J}}^{(0)}$ fulfills the subsequent properties:
 \begin{enumerate}[label=(VR\arabic{*})]
  
  \item \label{VR1}
        $\phi(\xi_{M})^{2}=1_{\mathcal{T}_{M,<_{J}}^{(0)}}$. This follows directly from \ref{D1} and \ref{S1}.
  
  \item \label{VR2}
        $\phi(\xi_{M})=1_{\mathcal{T}_{M,<_{J}}^{(0)}}$ if $M$ has minors of Fano or dual-Fano type. This follows from 
        \ref{D1} and \ref{S2}.
  
  \item \label{VR3}
        $\phi([C_{i_{1}}C_{i_{2}}|C_{i_{3}}C_{i_{3}}])=1_{\mathcal{T}_{M,<_{J}}^{(0)}}$. This follows immediately from 
        \eqref{FM}.
        
  \item \label{VR4}
        $\phi([C_{i_{1}}C_{i_{2}}|C_{i_{3}}C_{i_{4}}])=\phi([C_{i_{3}}C_{i_{4}}|C_{i_{1}}C_{i_{2}}])$. This follows from a
        straightforward check of \eqref{FM}.

  \item \label{VR5}
        $\phi([C_{i_{1}}C_{i_{2}}|C_{i_{3}}C_{i_{4}}])
         \phi([C_{i_{1}}C_{i_{2}}|C_{i_{4}}C_{i_{5}}])
         \phi([C_{i_{1}}C_{i_{2}}|C_{i_{5}}C_{i_{3}}])=1_{\mathcal{T}_{M,<_{J}}^{(0)}}$.
        To see this, we need to distinguish between two cases:
        \begin{itemize}
        
         \item If $i_{1}=i_{2}$  or $i_{3}=i_{4}$ or $i_{4}=i_{5}$ or $i_{3}=i_{5}$ 
               \ref{VR5} easly follows from \eqref{FM}.
            
         \item Assume $i_{1}\neq i_{2}$, $i_{3}\neq i_{4}$, $i_{4}\neq i_{5}$, $i_{3}\neq i_{5}$. 
               From \eqref{FM}, it is not hard to see that \ref{VR5} is equivalent to exaclty one among the equations of 
               the family \ref{S4}. Thus, \ref{VR5} fulfills since all the equations of the family \ref{S4} hold.
       \end{itemize}
  
  \item \label{VR6}
        $\phi([C_{i_{1}}C_{i_{2}}|C_{i_{3}}C_{i_{4}}])
         \phi([C_{i_{1}}C_{i_{4}}|C_{i_{2}}C_{i_{3}}])
         \phi([C_{i_{1}}C_{i_{3}}|C_{i_{4}}C_{i_{2}}])=\eta_{M,<_{J}}$. To check this, let 
        $C_{i_{1}}$, $C_{i_{2}}$, $C_{i_{3}}$, $C_{i_{4}}$ be pairwise distinct circuits of $M$ such that 
        $\operatorname{dim}(C_{i_{1}}\cup C_{i_{2}}\cup C_{i_{3}}\cup C_{i_{4}})=1$ and let $\sigma\in\mathcal{S}_{4}$ be a 
        permutation such that $i_{\sigma(1)}<i_{\sigma(2)}<i_{\sigma(3)}<i_{\sigma(4)}$. 
        From \ref{VR3}, \ref{VR4} and \ref{VR5}, together with Note \ref{sym}, the symbols
        \begin{equation*}
         \phi([C_{i_{1}}C_{i_{2}}|C_{i_{3}}C_{i_{4}}])
        \end{equation*}
        fulfill the hypothesis of Lemma \ref{A1}. Thus, \ref{VR6} is equivalent to
         \begin{equation*}
         \begin{aligned}
                & \phi([C_{i_{\sigma(1)}}C_{i_{\sigma(2)}}|C_{i_{\sigma(3)}}C_{i_{\sigma(4)}}])                \\
          \cdot & \phi([C_{i_{\sigma(1)}}C_{i_{\sigma(4)}}|C_{i_{\sigma(2)}}C_{i_{\sigma(3)}}])                \\
          \cdot & \phi([C_{i_{\sigma(1)}}C_{i_{\sigma(3)}}|C_{i_{\sigma(4)}}C_{i_{\sigma(2)}}])=\eta_{M,<_{J}} \\
         \end{aligned}
         \end{equation*}
         From \eqref{FM}, this is the same as
         \begin{equation*}
         \begin{aligned}
                & (C_{i_{\sigma(1)}}C_{i_{\sigma(2)}}|C_{i_{\sigma(3)}}C_{i_{\sigma(4)}})                      \\
          \cdot & (C_{i_{\sigma(1)}}C_{i_{\sigma(4)}}|C_{i_{\sigma(2)}}C_{i_{\sigma(3)}})                      \\
          \cdot & (C_{i_{\sigma(1)}}C_{i_{\sigma(3)}}|C_{i_{\sigma(2)}}C_{i_{\sigma(4)}})^{-1}=\eta_{M,<_{J}}  \\
         \end{aligned}
         \end{equation*}
         and this holds by \ref{S3}.
   
  \item \label{VR7}
         $\phi([C_{i_{1}}C_{i_{2}}|C_{i_{6}}C_{i_{9}}])
          \phi([C_{i_{2}}C_{i_{3}}|C_{i_{4}}C_{i_{7}}])
          \phi([C_{i_{3}}C_{i_{1}}|C_{i_{5}}C_{i_{8}}])=1_{\mathcal{T}_{M,<_{J}}^{(0)}}$
         for any family of circuits $C_{i_{1}},\ldots,C_{i_{9}}$ as in \ref{R7}. 
         To check this, we have to distinguish between two subcases.
         \begin{itemize}
          \item If $i_{6}=i_{9}$, $i_{4}=i_{7}$ and $i_{5}=i_{8}$, 
                \ref{VR7} is obviously verified since \ref{VR3} holds.
                
          \item Conversely, there is a permutation $\sigma\in\mathcal{S}_{9}$ such that conditions \ref{O1} and \ref{O2} of 
                Definition \ref{TMI0} fulfill with $j_{d}=i_{\sigma(d)}$, $1\leq d\leq 9$. 
                With the same arguments of the proof of \ref{VR6}, we can apply Lemma \ref{A2}. 
                Hence, \ref{VR7} is equivalent to
                \begin{equation*}
                 \begin{aligned}
                  & \phi([C_{i_{\sigma(1)}}C_{i_{\sigma(2)}}|
                          C_{i_{\sigma(\sigma^{-1}(3)+3)}}C_{i_{\sigma(\sigma^{-1}(3)+6)}}])\cdot \\
          \cdot   & \phi([C_{i_{\sigma(2)}}C_{i_{\sigma(3)}}|
                          C_{i_{\sigma(\sigma^{-1}(1)+3)}}C_{i_{\sigma(\sigma^{-1}(1)+6)}}])\cdot \\ 
          \cdot   & \phi([C_{i_{\sigma(3)}}C_{i_{\sigma(1)}}|
                          C_{i_{\sigma(\sigma^{-1}(2)+3)}}C_{j_{\sigma(\sigma^{-1}(2)+6)}}]                    
                      =1_{\mathcal{T}_{M,<_{J}}^{(0)}}                                               \\
                 \end{aligned}
                \end{equation*}
                From \eqref{FM}, this is the same as
                \begin{equation*}
                 \begin{aligned}
        & \langle C_{i_{\sigma(1)}}C_{i_{\sigma(2)}}|
                  C_{i_{\sigma(\sigma^{-1}(3)+3)}}C_{i_{\sigma(\sigma^{-1}(3)+6)}}\rangle\cdot \\
\cdot   & \langle C_{i_{\sigma(2)}}C_{i_{\sigma(3)}}|
                  C_{i_{\sigma(\sigma^{-1}(1)+3)}}C_{i_{\sigma(\sigma^{-1}(1)+6)}}\rangle\cdot \\
\cdot   & \langle C_{i_{\sigma(3)}}C_{i_{\sigma(1)}}|
                  C_{i_{\sigma(\sigma^{-1}(2)+3)}}C_{j_{\sigma(\sigma^{-1}(2)+6)}}\rangle                    
                =1_{\mathcal{T}_{M,<_{J}}^{(0)}}                                               \\
                 \end{aligned}       
                \end{equation*}
                and this holds by \ref{S5}.
         \end{itemize}
 \end{enumerate}
 
 So that, there exists a unique group homomorphism 
 $\Phi:\mathbb{T}_{M}^{(2)}\longrightarrow\mathcal{T}_{M,<_{J}}^{(0)}$ with $\left.\Phi\right|_{\mathbb{W}_{M}^{(2)}}=\phi$.
 Similarly, let us consider the map $\psi:\mathcal{W}_{M,<_{J}}^{(0)}\longrightarrow\mathbb{T}_{M}^{(2)}$ defined by
 \begin{enumerate}[label=(T\arabic{*})]
  \item \label{T1} $\psi(\eta_{M,<_{J}})=\xi_{M}$;
  
  \item \label{T2} $\psi((C_{i_{1}}C_{i_{2}}|C_{i_{3}}C_{i_{4}}))=[C_{i_{1}}C_{i_{2}}|C_{i_{3}}C_{i_{4}}]$.
 \end{enumerate}
 
 The map $\psi:\mathcal{W}_{M,<_{J}}^{(0)}\longrightarrow\mathbb{T}_{M}^{(2)}$ satisfies the subsequent properties:
 \begin{enumerate}[label=(VS\arabic{*})]
  \item \label{VS1}
        $\psi(\eta_{M,<_{J}})^{2}=1_{\mathbb{T}_{M}^{(2)}}$. This follows from \ref{T1} and \ref{R1}.
  
  \item \label{VS2}
        $\psi(\eta_{M,<_{J}})=1_{\mathbb{T}_{M}^{(2)}}$ if $M$ has minors of Fano or dual-Fano type. 
        This follows from \ref{T1} together with \ref{R2}.
        
  \item \label{VS3}
        $\psi((C_{i_{1}}C_{i_{2}}|C_{i_{3}}C_{i_{4}}))
         \psi((C_{i_{1}}C_{i_{4}}|C_{i_{2}}C_{i_{3}}))
         \psi((C_{i_{1}}C_{i_{3}}|C_{i_{2}}C_{i_{4}}))^{-1}=\xi_{M}$ 
         for any
         family $\{C_{j_{1}},C_{j_{2}},C_{j_{3}},C_{j_{4}}\}\subseteq\mathfrak{C}$ with               
         $\operatorname{dim}(C_{j_{1}}\cup C_{j_{2}}\cup C_{j_{3}}\cup C_{j_{4}})=1$ and $j_{1}<j_{2}<j_{3}<j_{4}$.
         To see this, notice that by \ref{T2}, \ref{VS3} is equivalent to
         \begin{equation*}
          [C_{i_{1}}C_{i_{2}}|C_{i_{3}}C_{i_{4}}]
          [C_{i_{1}}C_{i_{4}}|C_{i_{2}}C_{i_{3}}]
          [C_{i_{1}}C_{i_{3}}|C_{i_{2}}C_{i_{4}}]^{-1}=\xi_{M}
         \end{equation*}
         By Note \ref{sym} this 
         is the same as 
         \begin{equation*}
          [C_{i_{1}}C_{i_{2}}|C_{i_{3}}C_{i_{4}}]
          [C_{i_{1}}C_{i_{4}}|C_{i_{2}}C_{i_{3}}]
          [C_{i_{1}}C_{i_{3}}|C_{i_{4}}C_{i_{2}}]=\xi_{M}
         \end{equation*}
         and this holds by \ref{R6}.
         
  \item \label{VS4} 
          \begin{equation*}
               \left\lbrace
                \begin{array}{l}
                  \psi((C_{j_{1}}C_{j_{2}}|C_{j_{3}}C_{j_{4}}))
                  \psi((C_{j_{1}}C_{j_{2}}|C_{j_{4}}C_{j_{5}}))
                  \psi((C_{j_{1}}C_{j_{2}}|C_{j_{3}}C_{j_{5}}))^{-1}=1_{\mathbb{T}_{M}^{(2)}} \\
                  
                  \psi((C_{j_{1}}C_{j_{3}}|C_{j_{2}}C_{j_{4}}))
                  \psi((C_{j_{1}}C_{j_{3}}|C_{j_{4}}C_{j_{5}}))
                  \psi((C_{j_{1}}C_{j_{3}}|C_{j_{2}}C_{j_{5}}))^{-1}=1_{\mathbb{T}_{M}^{(2)}} \\
                  
                  \psi((C_{j_{1}}C_{j_{4}}|C_{j_{2}}C_{j_{3}}))
                  \psi((C_{j_{1}}C_{j_{4}}|C_{j_{3}}C_{j_{5}}))
                  \psi((C_{j_{1}}C_{j_{4}}|C_{j_{2}}C_{j_{5}}))^{-1}=1_{\mathbb{T}_{M}^{(2)}} \\
                  
                  \psi((C_{j_{1}}C_{j_{5}}|C_{j_{2}}C_{j_{3}}))
                  \psi((C_{j_{1}}C_{j_{5}}|C_{j_{3}}C_{j_{4}}))
                  \psi((C_{j_{1}}C_{j_{5}}|C_{j_{2}}C_{j_{4}}))^{-1}=1_{\mathbb{T}_{M}^{(2)}} \\
                  
                  \psi((C_{j_{1}}C_{j_{4}}|C_{j_{2}}C_{j_{3}}))
                  \psi((C_{j_{2}}C_{j_{3}}|C_{j_{4}}C_{j_{5}}))
                  \psi((C_{j_{1}}C_{j_{5}}|C_{j_{2}}C_{j_{3}}))^{-1}=1_{\mathbb{T}_{M}^{(2)}} \\
                  
                  \psi((C_{j_{1}}C_{j_{3}}|C_{j_{2}}C_{j_{4}}))
                  \psi((C_{j_{2}}C_{j_{4}}|C_{j_{3}}C_{j_{5}}))
                  \psi((C_{j_{1}}C_{j_{5}}|C_{j_{2}}C_{j_{3}}))^{-1}=1_{\mathbb{T}_{M}^{(2)}} \\
                  
                  \psi((C_{j_{1}}C_{j_{3}}|C_{j_{2}}C_{j_{5}}))
                  \psi((C_{j_{2}}C_{j_{5}}|C_{j_{3}}C_{j_{4}}))
                  \psi((C_{j_{1}}C_{j_{4}}|C_{j_{2}}C_{j_{5}}))^{-1}=1_{\mathbb{T}_{M}^{(2)}} \\
                  
                  \psi((C_{j_{1}}C_{j_{2}}|C_{j_{3}}C_{j_{4}}))
                  \psi((C_{j_{2}}C_{j_{5}}|C_{j_{3}}C_{j_{4}}))
                  \psi((C_{j_{1}}C_{j_{5}}|C_{j_{3}}C_{j_{4}}))^{-1}=1_{\mathbb{T}_{M}^{(2)}} \\
                  
                  \psi((C_{j_{1}}C_{j_{2}}|C_{j_{3}}C_{j_{5}}))
                  \psi((C_{j_{2}}C_{j_{4}}|C_{j_{3}}C_{j_{5}}))
                  \psi((C_{j_{1}}C_{j_{4}}|C_{j_{3}}C_{j_{5}}))^{-1}=1_{\mathbb{T}_{M}^{(2)}} \\
                  
                  \psi((C_{j_{1}}C_{j_{2}}|C_{j_{4}}C_{j_{5}}))
                  \psi((C_{j_{2}}C_{j_{3}}|C_{j_{4}}C_{j_{5}}))
                  \psi((C_{j_{1}}C_{j_{3}}|C_{j_{4}}C_{j_{5}}))^{-1}=1_{\mathbb{T}_{M}^{(2)}} \\
                \end{array}
               \right.
              \end{equation*}
              where $C_{j_{1}}$, $C_{j_{2}}$, $C_{j_{3}}$, $C_{j_{4}}$, $C_{j_{5}}\in\mathfrak{C}$ are circuits of $M$ with 
              the properties that
              $\operatorname{dim}(C_{j_{1}}\cup C_{j_{2}}\cup C_{j_{3}}\cup C_{j_{4}}\cup C_{j_{5}})=1$ 
              and $j_{1}<j_{2}<j_{3}<j_{4}<j_{5}$.
              To see this, notice that by \ref{T2}, together with \ref{R4} and Note \ref{sym}, 
              the given family of equations
              is equivalent to 
              \begin{equation*}
               \left\lbrace
               \begin{array}{l}
                 [C_{j_{1}}C_{j_{2}}|C_{j_{3}}C_{j_{4}}]
                 [C_{j_{1}}C_{j_{2}}|C_{j_{4}}C_{j_{5}}]
                 [C_{j_{1}}C_{j_{2}}|C_{j_{5}}C_{j_{3}}]=1_{\mathbb{T}_{M}^{(2)}}\\
                 
                 [C_{j_{1}}C_{j_{3}}|C_{j_{2}}C_{j_{4}}]
                 [C_{j_{1}}C_{j_{3}}|C_{j_{4}}C_{j_{5}}]
                 [C_{j_{1}}C_{j_{3}}|C_{j_{5}}C_{j_{2}}]=1_{\mathbb{T}_{M}^{(2)}}\\
                 
                 [C_{j_{1}}C_{j_{4}}|C_{j_{2}}C_{j_{3}}]
                 [C_{j_{1}}C_{j_{4}}|C_{j_{3}}C_{j_{5}}]
                 [C_{j_{1}}C_{j_{4}}|C_{j_{5}}C_{j_{2}}]=1_{\mathbb{T}_{M}^{(2)}}\\
                 
                 [C_{j_{1}}C_{j_{5}}|C_{j_{2}}C_{j_{3}}]
                 [C_{j_{1}}C_{j_{5}}|C_{j_{3}}C_{j_{4}}]
                 [C_{j_{1}}C_{j_{5}}|C_{j_{3}}C_{j_{2}}]=1_{\mathbb{T}_{M}^{(2)}}\\
                 
                 [C_{j_{2}}C_{j_{3}}|C_{j_{1}}C_{j_{4}}]
                 [C_{j_{2}}C_{j_{3}}|C_{j_{4}}C_{j_{5}}]
                 [C_{j_{2}}C_{j_{3}}|C_{j_{5}}C_{j_{1}}]=1_{\mathbb{T}_{M}^{(2)}}\\
                 
                 [C_{j_{2}}C_{j_{4}}|C_{j_{1}}C_{j_{3}}]
                 [C_{j_{2}}C_{j_{4}}|C_{j_{3}}C_{j_{5}}]
                 [C_{j_{2}}C_{j_{4}}|C_{j_{5}}C_{j_{1}}]=1_{\mathbb{T}_{M}^{(2)}}\\
                 
                 [C_{j_{2}}C_{j_{5}}|C_{j_{1}}C_{j_{3}}]
                 [C_{j_{2}}C_{j_{5}}|C_{j_{3}}C_{j_{4}}]
                 [C_{j_{2}}C_{j_{5}}|C_{j_{4}}C_{j_{1}}]=1_{\mathbb{T}_{M}^{(2)}}\\
                 
                 [C_{j_{3}}C_{j_{4}}|C_{j_{1}}C_{j_{2}}]
                 [C_{j_{3}}C_{j_{4}}|C_{j_{2}}C_{j_{5}}]
                 [C_{j_{3}}C_{j_{4}}|C_{j_{5}}C_{j_{1}}]=1_{\mathbb{T}_{M}^{(2)}}\\
                 
                 [C_{j_{3}}C_{j_{5}}|C_{j_{1}}C_{j_{2}}]
                 [C_{j_{3}}C_{j_{5}}|C_{j_{2}}C_{j_{4}}]
                 [C_{j_{3}}C_{j_{5}}|C_{j_{4}}C_{j_{1}}]=1_{\mathbb{T}_{M}^{(2)}}\\
                 
                 [C_{j_{4}}C_{j_{5}}|C_{j_{1}}C_{j_{2}}]
                 [C_{j_{4}}C_{j_{5}}|C_{j_{2}}C_{j_{3}}]
                 [C_{j_{4}}C_{j_{5}}|C_{j_{3}}C_{j_{1}}]=1_{\mathbb{T}_{M}^{(2)}}\\
               \end{array}
               \right.
              \end{equation*}
              and all of these identities hold by \ref{R5}.
              
   \item \label{VS5}
         $\psi(\langle C_{j_{1}}C_{j_{2}}|C_{j_{6}}C_{j_{9}}\rangle)
          \psi(\langle C_{j_{2}}C_{j_{3}}|C_{j_{4}}C_{j_{7}}\rangle)
          \psi(\langle C_{j_{3}}C_{j_{1}}|C_{j_{5}}C_{j_{8}}\rangle)=1_{\mathbb{T}_{M}^{(2)}}$
         for any family of circuits $\{C_{i_{1}},\ldots,C_{i_{9}}\}\subseteq\mathfrak{C}$ as in \ref{S5}. 
         To check this, notice that \ref{T2} and \eqref{FM}, together with \ref{R4} and Note \ref{sym}, imply that  
          \begin{equation*}
           \psi(\langle C_{d_{1}}C_{d_{2}}|C_{d_{3}}C_{d_{4}}\rangle)=[ C_{d_{1}}C_{d_{2}}|C_{d_{3}}C_{d_{4}}]
          \end{equation*}
         Thus, \ref{VS5} is equivalent to
         \begin{equation*}
          [C_{j_{1}}C_{j_{2}}|C_{j_{6}}C_{j_{9}}]
          [C_{j_{2}}C_{j_{3}}|C_{j_{4}}C_{j_{7}}]
          [C_{j_{3}}C_{j_{1}}|C_{j_{5}}C_{j_{8}}]=1_{\mathbb{T}_{M}^{(2)}}
         \end{equation*}
         and this holds by \ref{R7}.
 \end{enumerate}
 
 Hence, there is a unique homomorphism 
 $\Psi:\mathcal{T}_{M,<_{J}}^{(0)}\longrightarrow\mathbb{T}_{M}^{(2)}$ with 
 $\left.\Psi\right|_{\mathcal{W}_{M,<_{J}}^{(0)}}=\psi$.
 To complete our proof it is enough to show that $\Psi$ is bijective. Exploting relations \ref{R3} and \ref{R4}, 
 together with Note \ref{sym}, we can see that $\mathbb{T}_{M}^{(2)}$ is actually generated by those symbols 
 $[C_{i_{1}} C_{i_{2}}|C_{i_{3}} C_{i_{4}}]$ where $i_{1}<i_{2}<i_{3}<i_{4}$. 
 In particular, this implies that $\Psi$ is surjective. On the other hand, by definition of the maps $\phi$ and $\psi$ 
 we have $\phi\circ\psi=\operatorname{Id}_{\mathcal{W}_{M,<_{J}}^{(0)}}$. 
 So that, $\Phi\circ\Psi=\operatorname{Id}_{\mathcal{T}_{M,<_{J}}^{(0)}}$ and from this it follows that $\Psi$ is injective 
 as well.
\end{proof}

  \appendix
    \section{Technical proofs}\label{AppendixA}
     For a given matroid $M$ with circuit set $\mathfrak{C}$, we consider $\mathbb{T}_{M}^{(2)}\text{-values}$
\begin{equation*}
 \lceil C_{q_{1}}C_{q_{2}}|C_{q_{3}}C_{q_{4}}\rceil
\end{equation*}
defined for circuits $C_{q_{1}}$, $C_{q_{2}}$, $C_{q_{3}}$, $C_{q_{4}}\in\mathfrak{C}$ such that 
$\operatorname{dim}(C_{q_{1}}\cup C_{q_{2}}\cup C_{q_{3}}\cup C_{q_{4}})=1$ and
$\{C_{q_{1}},C_{q_{2}}\}\cap\{C_{q_{3}},C_{q_{4}}\}=\emptyset$  
and fulfilling
\begin{enumerate}[label=(P\arabic{*})]
 \item \label{P1} $\lceil C_{q_{1}}C_{q_{2}}|C_{q_{3}}C_{q_{3}}\rceil=1_{\mathbb{T}_{M}^{(2)}}$
 \item \label{P2} $\lceil C_{q_{1}}C_{q_{2}}|C_{q_{3}}C_{q_{4}}\rceil=\lceil C_{q_{3}}C_{q_{4}}|C_{q_{1}}C_{q_{2}}\rceil$
 \item \label{P3} $\lceil C_{q_{1}}C_{q_{2}}|C_{q_{3}}C_{q_{4}}\rceil=\lceil C_{q_{1}}C_{q_{2}}|C_{q_{4}}C_{q_{3}}\rceil^{-1}$
\end{enumerate}

\begin{lemma}\label{A1}
 Let us consider a permutation 
 \begin{equation*}
  \sigma=\left(\begin{array}{cccc}
                     1    &     2     &     3     &     4     \\
                \sigma(1) & \sigma(2) & \sigma(3) & \sigma(4) \\
               \end{array}
         \right)\in\mathcal{S}_{4}.
 \end{equation*} 
 Then,
 \begin{equation*}
  \lceil C_{q_{1}}C_{q_{2}}|C_{q_{3}}C_{q_{4}}\rceil
  \lceil C_{q_{1}}C_{q_{4}}|C_{q_{2}}C_{q_{3}}\rceil
  \lceil C_{q_{1}}C_{q_{3}}|C_{q_{4}}C_{q_{2}}\rceil=1_{\mathbb{T}_{M}^{(2)}}
 \end{equation*}
 is equivalent to
 \begin{equation*}
  \begin{aligned}
         & \lceil C_{q_{\sigma(1)}}C_{q_{\sigma(2)}}|C_{q_{\sigma(3)}}C_{q_{\sigma(4)}}\rceil                          \\
   \cdot & \lceil C_{q_{\sigma(1)}}C_{q_{\sigma(4)}}|C_{q_{\sigma(2)}}C_{q_{\sigma(3)}}\rceil                          \\
   \cdot & \lceil C_{q_{\sigma(1)}}C_{q_{\sigma(3)}}|C_{q_{\sigma(4)}}C_{q_{\sigma(2)}}\rceil=1_{\mathbb{T}_{M}^{(2)}} \\
  \end{aligned}
 \end{equation*}
\end{lemma}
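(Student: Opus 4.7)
The plan is to verify the invariance by reducing to generators of $\mathcal{S}_4$ and exploiting the symmetries \ref{P1}--\ref{P3} of the bracket.

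First, I would derive the additional symmetry
\begin{equation*}
\lceil C_{q_2}C_{q_1}|C_{q_3}C_{q_4}\rceil = \lceil C_{q_1}C_{q_2}|C_{q_3}C_{q_4}\rceil^{-1}
\end{equation*}
by applying \ref{P2}, then \ref{P3}, then \ref{P2} again. Together with \ref{P2} and \ref{P3} itself, this gives a complete description of how the bracket transforms under the action of the Klein four group $V_4 \leq \mathcal{S}_4$ that stabilises the partition $\{\{1,2\},\{3,4\}\}$: each nontrivial element either fixes the bracket or sends it to its inverse, according to whether it preserves the cyclic order within each half.

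Next, since $\mathcal{S}_4$ is generated by the 3-cycle $(2\,3\,4)$ and the transposition $(1\,2)$, it suffices to check the claim on these two generators. For the 3-cycle, the transformed product
\begin{equation*}
\lceil C_{q_1}C_{q_3}|C_{q_4}C_{q_2}\rceil \cdot \lceil C_{q_1}C_{q_2}|C_{q_3}C_{q_4}\rceil \cdot \lceil C_{q_1}C_{q_4}|C_{q_2}C_{q_3}\rceil
\end{equation*}
is just a cyclic rearrangement of the three factors of the original, hence equal to it by commutativity of $\mathbb{T}_{M}^{(2)}$. For the transposition $(1\,2)$, applying the derived symmetries factor by factor rewrites each of the three transformed brackets as the inverse of the corresponding original bracket; concretely, $\lceil C_{q_2}C_{q_1}|C_{q_3}C_{q_4}\rceil = \lceil C_{q_1}C_{q_2}|C_{q_3}C_{q_4}\rceil^{-1}$ directly, while $\lceil C_{q_2}C_{q_4}|C_{q_1}C_{q_3}\rceil$ and $\lceil C_{q_2}C_{q_3}|C_{q_4}C_{q_1}\rceil$ become $\lceil C_{q_1}C_{q_3}|C_{q_4}C_{q_2}\rceil^{-1}$ and $\lceil C_{q_1}C_{q_4}|C_{q_2}C_{q_3}\rceil^{-1}$ respectively after combining \ref{P2} and \ref{P3}.

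Combining these two observations, induction on the word length in the generators $\{(1\,2), (2\,3\,4)\}$ shows that for every $\sigma \in \mathcal{S}_4$ there is a sign $\varepsilon(\sigma)\in\{+1,-1\}$ with
\begin{equation*}
\lceil C_{q_{\sigma(1)}}C_{q_{\sigma(2)}}|C_{q_{\sigma(3)}}C_{q_{\sigma(4)}}\rceil \cdot \lceil C_{q_{\sigma(1)}}C_{q_{\sigma(4)}}|C_{q_{\sigma(2)}}C_{q_{\sigma(3)}}\rceil \cdot \lceil C_{q_{\sigma(1)}}C_{q_{\sigma(3)}}|C_{q_{\sigma(4)}}C_{q_{\sigma(2)}}\rceil = \bigl(\lceil C_{q_1}C_{q_2}|C_{q_3}C_{q_4}\rceil \lceil C_{q_1}C_{q_4}|C_{q_2}C_{q_3}\rceil \lceil C_{q_1}C_{q_3}|C_{q_4}C_{q_2}\rceil\bigr)^{\varepsilon(\sigma)}.
\end{equation*}
Since an element of $\mathbb{T}_{M}^{(2)}$ is trivial if and only if its inverse is, the two displayed identities in the lemma are equivalent. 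The only slightly delicate point is bookkeeping of the signs when the transposition $(1\,2)$ interacts with a bracket where $q_1$ or $q_2$ does not appear in the ``first'' half; this is handled uniformly by first using \ref{P2} to move the relevant index into the first half, applying the derived swap rule, and then using \ref{P2} again.
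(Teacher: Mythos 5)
Your proof is correct and follows essentially the same route as the paper: reduce to a generating set of $\mathcal{S}_{4}$ and verify invariance of the triple product by direct computation from \ref{P1}--\ref{P3} (the paper uses the adjacent transpositions $(1\,2),(2\,3),(3\,4)$ where you use $(1\,2)$ and $(2\,3\,4)$, a cosmetic difference). Your explicit derivation of the first-half swap rule $\lceil C_{q_2}C_{q_1}|C_{q_3}C_{q_4}\rceil=\lceil C_{q_1}C_{q_2}|C_{q_3}C_{q_4}\rceil^{-1}$ and the sign bookkeeping $\varepsilon(\sigma)$ simply spell out the computation the paper leaves implicit.
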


\begin{proof}
 Since $\mathcal{S}_{4}$ is generated by transpositions $(12)$, $(23)$, $(34)$ it is 
 enough to verify our statement for $\sigma$ being such transposition, which can be done by a direct computation using 
 \ref{P1}, \ref{P2} and \ref{P3}.
\end{proof}

\begin{lemma}\label{A2}
 Let $\{C_{q_{1}},\ldots,C_{q_{9}}\}\subseteq\mathfrak{C}$ be circuits as in \ref{R7} and let $\iota$, $\nu\in\mathcal{S}_{9}$ 
 be the permutations
 \begin{equation*}
  \iota=\left(\begin{array}{ccccccccc}
                 1    &   2      &  3       &  4     &   5    &   6    &    7   &    8   & 9 \\
             \iota(1) & \iota(2) & \iota(3) &  4     &   5    &   6    &    7   &    8   & 9 \\
            \end{array}
      \right)
 \end{equation*}
 and 
 \begin{equation*}
  \nu=(47)(58)(69)
 \end{equation*}
 Finally, let 
 \begin{equation*}
 \sigma=\left(\begin{array}{ccccccccc}
            1  &   2       &  3        &  4        &   5       &   6       &    7      &    8      & 9 \\
     \sigma(1) & \sigma(2) & \sigma(3) & \sigma(4) & \sigma(5) & \sigma(6) & \sigma(7) & \sigma(8) & \sigma(9) \\
            \end{array}
      \right)\in\langle\mu,\nu\rangle
\end{equation*}
Then, 
\begin{equation*}
 \lceil C_{q_{1}}C_{q_{2}}|C_{q_{6}}C_{q_{9}}\rceil
 \lceil C_{q_{1}}C_{q_{2}}|C_{q_{6}}C_{q_{9}}\rceil
 \lceil C_{q_{1}}C_{q_{2}}|C_{q_{6}}C_{q_{9}}\rceil=1_{\mathbb{T}_{M}^{(2)}} 
\end{equation*}
is equivalent to
\begin{equation*}
 \begin{aligned}
              & \lceil C_{q_{\sigma(1)}}C_{q_{\sigma(2)}}|
                       C_{q_{\sigma(\sigma^{-1}(3)+3)}}C_{q_{\sigma(\sigma^{-1}(3)+6)}}\rceil\cdot \\
      \cdot   & \lceil C_{q_{\sigma(2)}}C_{q_{\sigma(3)}}|
                       C_{q_{\sigma(\sigma^{-1}(1)+3)}}C_{q_{\sigma(\sigma^{-1}(1)+6)}}\rceil\cdot \\
      \cdot   & \lceil C_{q_{\sigma(3)}}C_{q_{\sigma(1)}}|
                       C_{q_{\sigma(\sigma^{-1}(2)+3)}}C_{q_{\sigma(\sigma^{-1}(2)+6)}}\rceil                    
                =1_{\mathbb{T}_{M}^{(2)}}                                                   \\
                 \end{aligned} 
\end{equation*}
\end{lemma}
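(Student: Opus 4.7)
The approach naturally mirrors that of Lemma \ref{A1}. The subgroup $\langle\iota,\nu\rangle\le\mathcal{S}_{9}$ is generated by the three involutions $\tau_{12}=(12)$, $\tau_{23}=(23)$ (both fixing $\{4,\ldots,9\}$ pointwise) and $\nu=(47)(58)(69)$, since the first two generate the copy of $\mathcal{S}_{3}$ in which $\iota$ varies. Hence it suffices to verify the claimed equivalence when $\sigma$ is one of these three generators.

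Before the case analysis I would derive from \ref{P2} and \ref{P3} the auxiliary symmetry
\begin{equation*}
\lceil C_{q_{2}}C_{q_{1}}|C_{q_{3}}C_{q_{4}}\rceil
= \lceil C_{q_{1}}C_{q_{2}}|C_{q_{3}}C_{q_{4}}\rceil^{-1},
\end{equation*}
obtained in three moves: apply \ref{P2} to transpose the two pairs, \ref{P3} to swap the now-right entries, and \ref{P2} once more. Together with \ref{P3}, this gives complete control over reversing the entries of either pair at the cost of an inversion in $\mathbb{T}_{M}^{(2)}$.

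In each case the values $\sigma^{-1}(k)+3$ and $\sigma^{-1}(k)+6$ can be read off directly. For $\sigma=\nu$ one has $\sigma^{-1}(k)=k$ for $k\in\{1,2,3\}$, so the left pairs of the three factors are unchanged, whereas each right pair gets swapped; three applications of \ref{P3} then turn the permuted triple into the inverse of the original. For $\sigma=\tau_{12}$ the left pairs become $(q_{2},q_{1})$, $(q_{1},q_{3})$, $(q_{3},q_{2})$ while the right pairs are unchanged, and three applications of the derived swap-first-pair identity again invert each factor. The case $\sigma=\tau_{23}$ is completely analogous. Since $\mathbb{T}_{M}^{(2)}$ is abelian, the permuted triple product is in every case the inverse of the original triple product, which immediately gives the equivalence of the two vanishing statements.

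No step is genuinely delicate; the main potential pitfall is book-keeping the shifted indices $\sigma(\sigma^{-1}(k)+3)$ and $\sigma(\sigma^{-1}(k)+6)$ correctly for each generator, which is purely mechanical once the permutations are written out. Once the three generators are handled, the result for an arbitrary $\sigma\in\langle\iota,\nu\rangle$ follows because the property ``the permuted product equals the original or its inverse'' is preserved under composition, so the vanishing condition propagates to the entire subgroup.
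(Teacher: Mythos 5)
Your proposal is correct and follows essentially the same route as the paper: reduce to the generators $(12)$, $(23)$ and $\nu$ of $\langle\iota,\nu\rangle$ and verify each case by direct computation with \ref{P2} and \ref{P3}. You in fact supply more detail than the paper does (the derived identity for swapping the left pair, and the observation that each generator inverts the triple product), and you correctly read the intended product through the evident typo in the displayed statement.
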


\begin{proof}
 The subgroup $\lbrace\iota,\nu\rbrace<\mathcal{S}_{9}$ is generated by $(12)$, $(23)$ and $\nu$.
 Hence, it suffices to check the statement for these permutations. 
 With \ref{P2} and \ref{P3}, this can be easly done by direct computation.
\end{proof}

\bibliographystyle{amsalpha}

\bibliography{ITG_S}

\newcommand{\etalchar}[1]{$^{#1}$}
\providecommand{\bysame}{\leavevmode\hbox to3em{\hrulefill}\thinspace}
\providecommand{\MR}{\relax\ifhmode\unskip\space\fi MR }
\providecommand{\MRhref}[2]{%
  \href{http://www.ams.org/mathscinet-getitem?mr=#1}{#2}
}
\providecommand{\href}[2]{#2}
\begin{thebibliography}{BLVS{\etalchar{+}}99}

\bibitem[AD12]{AD12}
Laura Anderson and Emanuele Delucchi, \emph{Foundations for a theory of complex
  matroids}, Discrete Comput. Geom. \textbf{48} (2012), no.~4, 807--846.

\bibitem[Bak16]{Bak16}
Matthew Baker, \emph{Matroids over hyperfields},
  \href{http://arxiv.org/abs/1601.01204}{[arxiv:1601.01204]} (2016).

\bibitem[BGD10]{BL10}
Petter Br{\"a}nd{\'e}n and Rafael~S. Gonz{\'a}lez~D'Le{\'o}n, \emph{On the
  half-plane property and the {T}utte group of a matroid}, J. Combin. Theory
  Ser. B \textbf{100} (2010), no.~5, 485--492.

\bibitem[BLVS{\etalchar{+}}99]{Zie}
Anders Bj{\"o}rner, Michel Las~Vergnas, Bernd Sturmfels, Neil White, and
  G{\"u}nter~M. Ziegler, \emph{Oriented matroids}, second ed., Encyclopedia of
  Mathematics and its Applications, vol.~46, Cambridge University Press,
  Cambridge, 1999.

\bibitem[DS15]{DS15}
Emanuele Delucchi and Elia Saini, \emph{Phasing spaces of matroids},
  \href{http://arxiv.org/abs/1504.07109}{[arXiv:1504.07109]} (2015).

\bibitem[DW89]{DW89}
Andreas W.~M. Dress and Walter Wenzel, \emph{Geometric algebra for
  combinatorial geometries}, Adv. Math. \textbf{77} (1989), no.~1, 1--36.

\bibitem[DW90]{DW90}
\bysame, \emph{On combinatorial and projective geometry}, Geom. Dedicata
  \textbf{34} (1990), no.~2, 161--197.

\bibitem[DW91]{DW91}
\bysame, \emph{Grassmann-{P}l\"ucker relations and matroids with coefficients},
  Adv. Math. \textbf{86} (1991), no.~1, 68--110.

\bibitem[GRS95]{GRS95}
Israel~M. Gel'fand, Grigori~L. Rybnikov, and David~A. Stone, \emph{Projective
  orientations of matroids}, Adv. Math. \textbf{113} (1995), no.~1, 118--150.

\bibitem[Oxl92]{Oxl92}
James~G. Oxley, \emph{Matroid theory}, Oxford Science Publications, The
  Clarendon Press, Oxford University Press, New York, 1992.

\bibitem[Wen89]{Wen89}
Walter Wenzel, \emph{A group-theoretic interpretation of {T}utte's homotopy
  theory}, Adv. Math. \textbf{77} (1989), no.~1, 37--75.

\bibitem[Wen91]{Wen91}
\bysame, \emph{Projective equivalence of matroids with coefficients}, J.
  Combin. Theory Ser. A \textbf{57} (1991), no.~1, 15--45.

\end{thebibliography}

\end{document}